\documentclass[11pt]{amsart}

\usepackage{amssymb,amsfonts,lineno,amsthm,amscd,stmaryrd,dsfont,esint}

%%% FOR TIKZ
\usepackage{tikz}
\usetikzlibrary{decorations.pathmorphing}
%\usepackage{tkz-euclide}
%\usetkzobj{all}

% showkeys
%\usepackage[notref,notcite]{showkeys}

%% Spacing

\usepackage[margin=1in]{geometry}
\setlength{\parskip}{\smallskipamount}

%% Colors - temporary
\usepackage{xcolor}

% Environments
\theoremstyle{plain}
\newtheorem{thm}{Theorem}

\newtheorem{lem}{Lemma}
\newtheorem{prop}[lem]{Proposition}

\theoremstyle{definition}

%% Numbering

\numberwithin{lem}{section}
\numberwithin{equation}{section}

\newcommand{\ep}{\varepsilon}
\newcommand{\R}{\mathbb{R}}
\newcommand{\C}{\mathbb{C}}
\newcommand{\Rd}{\mathbb{R}^\d}

\renewcommand{\d}{d}

\renewcommand{\P}{\mathbb{P}}

\newcommand{\I}{\mathcal I}
\renewcommand{\Re}{\mathrm{Re}}
\newcommand{\muup}{\widehat\mu}
\newcommand{\indic}{\mathds{1}}
\renewcommand{\H}{H^{\muup}}
\newcommand{\nab}{\nabla}
\DeclareMathOperator{\dist}{dist}
\DeclareMathOperator{\supp}{supp}

\begin{document}

\title[A Constrained optimization problem for the Ginibre ensemble]{Remarks on a constrained optimization problem for the Ginibre ensemble}

\author[S. N. Armstrong]{Scott N. Armstrong}
\address{CEREMADE (UMR CNRS 7534), Universit\'e Paris-Dauphine, Paris, France}
\email{armstrong@ceremade.dauphine.fr}

\author[S. Serfaty]{Sylvia Serfaty}
\address{Laboratoire Jacques Louis Lions, UPMC Paris 6, France and The Courant Institute of Mathematical Sciences, New York University, USA\\}
\email{serfaty@ann.jussieu.fr}

\author[O. Zeitouni]{Ofer Zeitouni}
\address{Faculty of Mathematics, Weizmann Institute of Science, Rehovot 76100, Israel and School of Mathematics, University of Minnesota, MN 55455, USA}
\email{zeitouni@math.umn.edu}

%\date{\today}
\date{April 1, 2013}
\keywords{Obstacle problem, Ginibre ensemble, large deviations}
\subjclass[2010]{31A35, 49K10, 60B20}

\begin{abstract}

We study the limiting distribution of the eigenvalues of the Ginibre ensemble conditioned on the event that a certain proportion lie in a given region of the complex plane. Using an equivalent formulation as an obstacle problem, we describe the optimal distribution and some of its monotonicity properties.
\end{abstract}\maketitle

\maketitle

\section{Introduction and statement of results}

With high probability, a square $N$-by-$N$ matrix with complex independent, 
standard
Gaussian entries (i.e., the {\it Ginibre ensemble}) has eigenvalues evenly spread in the ball of radius $\sqrt N$. More precisely, after multiplying the matrix by $N^{-1/2}$ and letting $N\to \infty$, one finds that the spectral measure converges (weakly in the sense of measures) to the (suitably normalized) uniform measure on the unit disk $D \subseteq \C$:
\begin{equation*}\label{}
\frac1N \sum_{i=1}^N \delta_{z_i} \rightharpoonup \mu_0
\end{equation*}
where the $z_i$'s are the eigenvalues of the matrix, $\delta_{z}$ denotes the Dirac mass at $z\in \C$ and
\begin{equation}\label{mu0}
d\mu_0 : = \frac1{\pi} \indic_{D}\,dx.
\end{equation}
This statement is called the \emph{circular law} and goes back to Ginibre~\cite{G}.

Hiai and Petz~\cite{HP}
proved, in addition, that the law of spectral measure of the eigenvalues obeys a large deviations principle with speed $N^2$ and rate function
\begin{equation}\label{e.I}
\I[\mu] := -\int_{\C\times \C}  \log |x-y| \, d\mu(x) \, d\mu(y) + \int_{\C} |x|^2 \, d\mu(x).
\end{equation}
(See Ben Arous and Zeitouni~\cite{BAZ} for the case of real Gaussian entries.)
Roughly, this means that if $\mathcal A \subseteq \mathcal P(\C)$, then
\begin{equation*}\label{}
\P \left[ \frac1N \sum_{i=1}^N \delta_{z_i} \in \mathcal A \right] \simeq \exp\left( -N^2 (\min_{\mathcal A} \I- \min_{\mathcal P(\C) }  \I) \right).
\end{equation*}
Of course, the unique minimizer of $\I$ over the set $\mathcal P(\C)$ of probability measures on $\C$ is the circular
law $\mu_0$. In view of this result, in order to understand the likely arrangement of eigenvalues after conditioning on a certain rare event (e.g., 
having a ``hole" in $D$ without a significant number of
eigenvalues) it suffices to find the minimizer of $\I$ on the event.

Related random matrix models are the {\it Gaussian unitary ensemble} (GUE) and {\it Gaussian orthogonal ensemble} (GOE), in which the matrices are constrained to be Hermitian and real symmetric, respectively, and thus have real eigenvalues. The analogue of the circular law is Wigner's \emph{semicircle law}:
\begin{equation*}\label{}
\frac1N \sum_{i=1}^N \delta_{x_i} \rightharpoonup \nu_0 := \indic_{ |x| \leq 2} \sqrt{4-x^2}\,dx.
\end{equation*}
Here $\nu_0$ minimizes $\I$ over the set $\mathcal P(\R)$ of probability measures on the real line. The corresponding large deviations principle in this setting was proved in the seminal paper by Ben Arous and Guionnet~\cite{BAG}.

In this context, Majumdar, Nadal, Scardicchio and Vivo~\cite{MNSV1,MNSV} examined the rare event that a given proportion $p\in [0,1]$ of the eigenvalues must lie on interval $[a,\infty)$. Determining the likely configuration of the eigenvalues as $N\to \infty$, conditioned on this rare event, is equivalent to minimizing $\I$ (over probability measures on $\R$) under the constraint that $\mu( [a,\infty)) \geq p$. Explicit formulas are given in~\cite{MNSV} for the optimal distribution $\mu$. 
 A nontrivial situation occurs when $p> \nu_0([a,\infty))$, in which case they find that $\mu$ has an $L^1$ density $\varphi(x)$ which tends to infinity as $x \to a+$ and vanishes in an interval $[a-\ep,a)$ for some $\ep > 0$.

In this paper we examine the analogue of this question for the Ginibre ensemble.
We consider an open subset $U \subseteq \C$  with $C^{1,1}$ boundary 
{such that the boundary of $U \cap D$ is locally Lipschitz,} and we study the unlikely event in which the random matrix has too many eigenvalues in $U$. To avoid trivial situations we assume that $D \setminus U$ is nonempty, fix $\mu_0(U)< p \leq 1$ and minimize $\I$ subject to the constraint that
\begin{equation}\label{e.const}
\mu\!\left(\overline U\right) \geq p.
\end{equation}
Due to the convexity of the constraint and the strict convexity of the
functional $\I$, there is a unique minimizer of $\I$ over
\begin{equation*}\label{}
\mathcal C:= \{ \mu \in \mathcal P(\C) \, : \, \eqref{e.const} \ \mbox{holds} \},
\end{equation*}
which we denote by $\muup$.
Note that if $p=1$, then $\mathcal C$ is the set of probability measures with a ``hole" in $\C\setminus \overline U$. 

In contrast to the results for the one dimensional model studied in~\cite{MNSV}, our analysis typically
does not lead to explicit formulas. Rather, we study certain qualitative properties of the minimizing measure $\muup$.
(An exception is the case where $p=1$ and the set $U$ is a half-space parallel
to the imaginary axis; see Section \ref{sec-example} for that setup.)

Here is the statement of the main results. (See Figure~\ref{F} for an illustration of the results.)
\begin{thm} \label{T}
{Assume that $U\subseteq \C$ is open, $\partial U$ is locally $C^{1,1}$ and $\partial (U\cap D)$ is locally Lipschitz.} Then measure $\muup$ can be decomposed as
\begin{equation*}\label{}
d\muup = \frac1\pi \indic_{V}\,dx + d\muup_S,
\end{equation*}
where $V\subseteq \C$ is bounded and open and 
\begin{equation*}\label{}
d\muup_S = g\, d\mathcal H^1\vert_{\partial U},
\end{equation*}
where $\mathcal H^1$ denotes the one-dimensional Hausdorff measure and $g\geq 0$ belongs to 
$L^2(\partial U)$. Moreover,
\begin{equation}\label{e.sing}
\partial U \cap D \subseteq \supp\muup_S,
\end{equation}
\begin{equation} \label{e.gap}
  \overline {\big(V\setminus  U \big)}\cap \partial U = \varnothing,
\end{equation}
\begin{equation}\label{e.contract}
  V\setminus U\subseteq D, \quad \mbox{and moreover, if} \ \ U\not\subseteq D  \ \ \mbox{then}  \ \ \overline{V}\setminus \overline{U} \subseteq D
\end{equation}
and finally
\begin{equation}\label{e.expand}
{\overline{U \cap D}} \subseteq V \cup \supp \muup_S.
\end{equation}
\end{thm}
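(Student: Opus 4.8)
The plan is to reformulate the constrained minimization of $\I$ over $\mathcal C$ as an obstacle problem for the logarithmic potential, and then read off the structural statements of the theorem from the regularity theory for the obstacle problem together with the comparison/monotonicity arguments forced by the constraint $\mu(\overline U)\ge p$.

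First I would write down the Euler–Lagrange (variational) conditions for $\muup$. Perturbing $\muup$ by admissible test measures, one gets that the effective potential
\begin{equation*}
h^{\muup}(x) := -2\int_{\C}\log|x-y|\,d\muup(y) + |x|^2
\end{equation*}
satisfies $h^{\muup} = c$ (a constant, the Lagrange constant) $\muup$-a.e. on $\overline U$, $h^{\muup}\ge c$ quasi-everywhere on $\C\setminus\overline U$, while on the complement of the support inside $\overline U$ one only has $h^{\muup}\ge c'$ for the Lagrange multiplier $c'\le c$ coming from the inequality constraint. Introducing $U^{\muup}(x) = -\int\log|x-y|\,d\muup(y)$ and using $-\Delta U^{\muup} = 2\pi\muup$ (in the distributional sense, with the normalization from \eqref{mu0}), the region $V$ where $\muup$ has density $\frac1\pi$ is exactly $\{h^{\muup}=c\}^\circ \cap \{\text{outside }\overline U\text{ or in }U\}$, i.e. the noncoincidence set of a double obstacle problem. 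This gives the decomposition $d\muup = \frac1\pi\indic_V\,dx + d\muup_S$: the singular part $\muup_S$ is supported where the Laplacian of $h^{\muup}$ concentrates, which (since $h^{\muup}$ is harmonic off $\overline U\cup V$ and affine-Laplacian on $V$) can only be on $\partial U$. The $L^2(\partial U)$ bound on the density $g$ is where I expect to invoke $C^{1,1}$ regularity of $\partial U$ and the fact that $h^{\muup}\in H^1_{loc}$ with a one-sided Laplacian bound — this is essentially the statement that the normal derivative jump of an $H^1$ function across a $C^{1,1}$ surface, when the function solves an obstacle-type problem on each side, lies in $L^2$; this is the part I'd expect to require the most care, and the local Lipschitz hypothesis on $\partial(U\cap D)$ is presumably what makes the two sides match up cleanly near the "corners" where $\partial U$ meets $\partial D$.

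With the decomposition in hand, the four displayed assertions should follow from comparison arguments. For \eqref{e.sing}, $\partial U\cap D\subseteq\supp\muup_S$: if $\muup_S$ vanished on a relatively open piece of $\partial U$ inside $D$, then $h^{\muup}$ would be genuinely harmonic across that piece and, using that $\muup$ coincides with $\mu_0$'s density $\frac1\pi$ on both sides locally (because we are strictly inside $D$ and at the "free" value $c$), the unconstrained minimizer $\mu_0$ would already be admissible there, contradicting the strict benefit of pushing mass into $\overline U$; more precisely one compares $h^{\muup}$ with the potential of $\mu_0$ and uses the strong maximum principle. The gap statement \eqref{e.gap} says $\overline{V\setminus U}$ does not touch $\partial U$: the point is that $\muup$ puts positive surface mass on $\partial U$ (by \eqref{e.sing} near $D$, and by a separate argument away from $D$), and a positive surface charge creates a genuine kink in $h^{\muup}$ forcing $h^{\muup}>c$ on one side in a neighborhood, so $V$ (where $h^{\muup}=c$) cannot accumulate at $\partial U$ from the outside of $U$ — this is the analogue of the "gap" $[a-\ep,a)$ found in \cite{MNSV}. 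The containment \eqref{e.contract}, $V\setminus U\subseteq D$, and its strengthening when $U\not\subseteq D$, should come from comparing $\muup$ to the circular law: outside $D$ the equilibrium potential $|x|^2 - 2U^{\mu_0}$ is strictly increasing away from $D$, so $h^{\muup}\ge c$ cannot be saturated there except possibly inside $\overline U$; the strengthening uses an additional strict inequality (Hopf lemma type) when there is genuine $U$-boundary outside $\overline D$.

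Finally \eqref{e.expand}, $\overline{U\cap D}\subseteq V\cup\supp\muup_S$: this is a "no dead zone inside $U\cap D$" statement. I would argue by contradiction: if there were a relatively open subset of $U\cap D$ carrying no mass of $\muup$ at all (neither the $\frac1\pi\indic_V$ part nor $\muup_S$), then on that open set $h^{\muup}$ is harmonic and $\ge c'$ with equality only at the boundary of the dead zone; but pushing an infinitesimal amount of the circular-law density into that region strictly decreases $\I$ while keeping the constraint \eqref{e.const} satisfied (it only increases $\mu(\overline U)$), contradicting optimality — unless the Lagrange multiplier bookkeeping forbids it, which it does not since $c'\le c$ and we are inside $U$. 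The one delicate point is treating the closure (getting $\overline{U\cap D}$ rather than $U\cap D$), which I'd handle by the upper semicontinuity of $h^{\muup}$ and the openness of $V$, together with \eqref{e.sing} to cover the boundary portion $\partial U\cap D$. Throughout, the main obstacle I anticipate is establishing the $L^2$ regularity of the singular density $g$ and, relatedly, justifying the distributional identities for $h^{\muup}$ across $\partial U$ in the presence of the Lipschitz corners of $\partial(U\cap D)$; once that analytic foundation is secure, the geometric statements are comparison-principle consequences.
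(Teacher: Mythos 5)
Your overall plan---recast the constrained minimization as an obstacle problem for the logarithmic potential, use obstacle-problem regularity theory for the decomposition, and then compare with the unconstrained equilibrium potential $H^{\mu_0}$ to deduce the geometric statements via maximum-principle arguments---is exactly the paper's approach, and much of the qualitative reasoning (the gap created by the jump in the obstacle, the comparison with $\mu_0$ outside $D$) is on the right track. However there are several genuine gaps, and the most serious one concerns~\eqref{e.expand}.

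First, a structural point: the obstacle is not ``double'' and the Lagrange constants point the other way. Because $p>\mu_0(U)$, the Euler--Lagrange conditions produce two constants $c_1>c_2$ with $2\H+|x|^2\ge c_1$ q.e.\ in $\overline U$ (equality on $\supp\muup\cap\overline U$) and $2\H+|x|^2\ge c_2$ q.e.\ in $\C\setminus\overline U$ (equality on $\supp\muup\setminus\overline U$); the obstacle $\psi$ is \emph{higher} inside $\overline U$, and it is a single (discontinuous) obstacle, not a two-sided one. Your write-up (``$h^{\muup}\ge c$ q.e.\ on $\C\setminus\overline U$'' with a \emph{lower} constant $c'\le c$ appearing inside $\overline U$) has this reversed, and the reversal matters below.

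The gap in~\eqref{e.expand}: your argument claims that a ``dead zone'' inside $U\cap D$ contradicts optimality by pushing infinitesimal mass there. But first-order optimality gives $2\H+|x|^2\ge c_1$ \emph{everywhere} in $\overline U$, with equality on $\supp\muup\cap\overline U$. Moving mass from $\supp\muup\cap\overline U$ (where the cost is exactly $c_1$) into the dead zone (where it is $\ge c_1$) does \emph{not} strictly decrease $\I$ to first order, so this perturbation argument is inconclusive. The claim genuinely needs the free-boundary mechanism: introduce $w:=H^{\mu_0}-\H-\tfrac12(c_3-c_1)$, which is nonnegative, vanishes on $\overline D\cap\overline U$, is superharmonic off $\overline V$, and has $w(x)=\nabla w(x)=0$ at a dead-zone boundary point $x\in\partial V$. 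Hopf's lemma then yields a contradiction---but only after one knows $\partial V$ is $C^{1,\alpha}$ near $x$, and this is where Caffarelli's free-boundary regularity theory and the \emph{Lipschitz hypothesis on $\partial(U\cap D)$} enter: the latter rules out the ``thin strip'' singular points of the contact set (cf.~\cite[Theorem~6]{C}). You put that hypothesis in the wrong place (you guessed it governs the $L^2$ matching at corners); it is actually what rescues the Hopf argument for~\eqref{e.expand}.

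The $L^2(\partial U)$ density also needs more than you allot: $H^1$ regularity of $\H$ does \emph{not} give a normal trace in $L^2$. One must first upgrade $\H$ to locally Lipschitz near $\partial U$ (the paper does this by replacing $\psi$ with a Lipschitz $\hat\psi$ that harmonically interpolates across the gap, then invoking~\cite[Theorem 2(a)]{C}), and then obtain one-sided Neumann traces by splitting $\H=\H_1+\H_2$ on $\overline U$ into a harmonic part (Dirichlet-to-Neumann map on $H^1(\partial U)\to L^2(\partial U)$ for $C^{1,1}$ domains) plus a $W^{2,p}$ part from elliptic regularity. Finally, your arguments for~\eqref{e.sing},~\eqref{e.gap} and~\eqref{e.contract} are heuristically aligned with the paper's, but they are ultimately run through the same auxiliary function $w$ and the strong maximum principle/Hopf lemma; the missing ingredient in your sketch is that comparison function and the two-sided bound $H^{\mu_0}-\tfrac12(c_3-c_2)\le\H\le H^{\mu_0}-\tfrac12(c_3-c_1)$, which supplies the inclusions $D\cap U\subseteq V\cap U$ and $V\setminus U\subseteq D\setminus U$ used throughout.
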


The fact that, at least in the case that $\partial U \cap D \neq \emptyset$, the singular part of $\muup$ is nonzero is in contrast to the one-dimensional model~\cite{MNSV} which has a minimizing measure absolutely continuous with respect to Lebesgue measure on $\R$. On the other hand, the qualitative picture portrayed by~\eqref{e.contract} and~\eqref{e.expand} -- an expansion of the support of the measure in $U$, the contraction of it in the complement of~$U$ and the appearance of a gap on the outside of~$\partial U$ along the support of $\muup_S$ -- is the same as that of~\cite{MNSV}. 

%% The first tikz figure is here.
\def\thesetU{plot [smooth cycle] coordinates { (3.6,5.17) (6,2.8) (6.2,-2.5) (4.95,-3.9) (2.85,-2.7) (2.9,-0.95) (4,0.2) (3.5,1.35) (2.15,1) (1.1,1.7) (1.5,3)}}
\def\thedisk{(0,0) circle (5.17)}
\def\thediskk{ (0.2,0) circle (5.25)}
\def\thediskkk{(-.2,0) circle (4.8)}
\def\VoutsideU{plot [smooth cycle] coordinates { (1.442,4.511) 
(1.042,4.636) (0.634,4.727) (0.218,4.782) (-0.400,4.800) (-0.618,4.782) (-1.034,4.727) (-1.442,4.636) (-1.842,4.511) (-2.229,4.350) ( -2.600,4.157) (-2.953,3.932) (-3.285,3.677) (-3.594,3.394) (-3.877,3.085) (-4.132,2.753) (-4.357,2.400) (-4.550,2.029)(-4.711,1.642) (-4.836,1.242)(-4.927,0.834) (-4.982,0.418) (-5.000,0.000)(-4.982,-0.418)(-4.927,-0.834)(-4.836,-1.242)(-4.711,-1.642)(-4.550,-2.029)(-4.357,-2.400)(-4.132,-2.753)(-3.877,-3.085)(-3.594,-3.394)(-3.285,-3.677)(-2.953,-3.932)(-2.600,-4.157)(-2.229,-4.350)(-1.842,-4.511)(-1.442,-4.636)(-1.034,-4.727)(-0.618,-4.782)(-0.200,-4.800) (0.218,-4.782)(0.634,-4.727) (1.042,-4.636)(1.442,-4.511)
(1.829,-4.350) (2.5,-4) (3.1,-3.4)  (2.41,-2.65) (2.47,-0.95)(3.15,0.22) (3.01,.735) (1.9,0.65) (0.75,1.6) (1.02,3) (1.8,4.2) } }
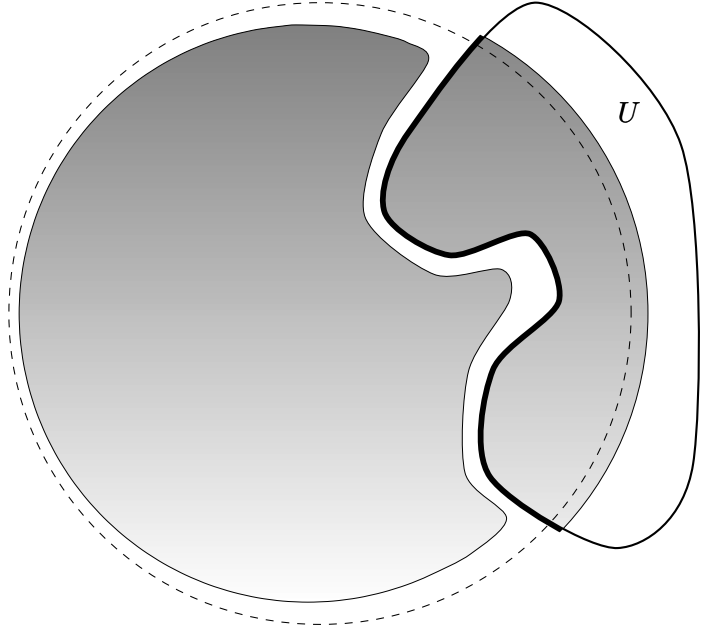
\begin{figure}
\centering
\begin{tikzpicture}[scale=0.8]
\begin{scope}
\clip \thediskk;
\draw[line width=1.25mm] \thesetU;
\end{scope}
\begin{scope}
\clip \thesetU;
\shade \thediskk;
\draw[ultra thin] \thediskk;
\end{scope}
\shade \VoutsideU;
\draw[ultra thin] \VoutsideU;
\draw[dashed] \thedisk;
\draw[thick] \thesetU;
\draw (5.5,3.35) node[left] 
      {$U$};
 \draw (5.5,3.35) node[left] 
      {$U$};
\end{tikzpicture}
\caption{An illustration of Theorem~
\ref{T} in the case $p$ is slightly larger than $\mu_0(U)$ and $U \not\subseteq D$. The dashed line is the unit ball and the shaded area is the support of the measure $\muup$. The thick part of the boundary of $\partial U$ represents the support of $\hat \mu_S$.}
\label{F}
\end{figure}

The proof of Theorem~\ref{T} is based on interpreting the minimization problem as an \emph{obstacle problem}; this connection should be
well-known, but we are not aware of an explicit reference. The advantage of writing the problem this way is that it allows us to use the classical free-boundary regularity theory of Caffarelli~\cite{C} combined with some maximal principle arguments such as Hopf's Lemma. 

\subsection*{Acknowledgements}

We thank Gilles Wainrib and Amir Dembo
for bringing this problem to our attention, and Satya Majumdar for enlighting 
discussions and a reference to \cite{MNSV}.
SS was supported by a EURYI award. OZ was partially supported by NSF grant DMS-1203201, the Israel Science Foundation and the Herman P. Taubman chair of Mathematics at the Weizmann Institute. 

\section{The proofs}

\subsection{Formulation as an obstacle problem}
We first characterize the measure $\muup$ in terms of an obstacle problem. The solution of the obstacle problem turns out to be the Newtonian potential generated by~$\muup$, which we denote by 
\begin{equation}\label{defH}
\H(x) := -\int_{\R^2} \log |x-y|\, d\muup(y).
\end{equation}
(We henceforth identify $\R^2$ with $\C$.) We show that $\H$ satisfies the variational inequality
\begin{equation}\label{e.vi}
\forall v\in \mathcal K,\qquad  \int_{\R^2} \nabla \H(y) \cdot \nabla \big( v - \H \big)(y)\, dy \geq 0,
\end{equation}
where 
\begin{equation*}\label{}
\mathcal K:= \left\{ v\in H^1_{\mathrm{loc}} (R^2)\,:\,   v-\H \ \ \mbox{has bounded support in} \ \R^2 \ \ \mbox{and} \ \ v \geq \psi \ \mbox{q.e. in} \ \R^2 \right\}
\end{equation*}
and the \emph{obstacle} is the function $\psi$ given by
\begin{equation}\label{e.psi}
  \psi(x):= \begin{cases}  \frac12 (c_1 - |x|^2 ) & x\in \overline{U}\\ 
    \frac12 (c_2-|x|^2) & x\not\in \overline{U}, \end{cases}
\end{equation}
and $c_1>c_2 \geq -\infty$ are constants determined below. Note that~\eqref{e.vi} is equivalent to the statement that, for every $R>0$, $\H$ is the unique solution of the following minimization problem:
\begin{equation}\label{e.optp}
\min \left\{ \int_{B_R} \big|\nabla v(y)\big|^2\, dy \,:\, v \in H^1(B_R)\ \mbox{such that} \  v-\H\in H^1_0(B_R) \ \mbox{and} \ v\geq \psi \ \mbox{in} \ B_R \right\}.
\end{equation}
Solutions of the variational inequality are said to be solutions of the \emph{obstacle problem}
\begin{equation}\label{e.obst}
\min\left\{ -\Delta \H, \H - \psi \right\} = 0 \quad \mbox{in} \ \R^2.
\end{equation}
That is, the precise interpretation of~\eqref{e.obst} is that $\H$ is a solution of~\eqref{e.vi}. The reader can check by an integration by parts that this notion is consistent with the classical interpretation of~\eqref{e.obst} in the case that $\H$ is smooth. For general reference on the obstacle problem, see~\cite{C,KS}.

To prove~\eqref{e.obst}, we observe that $-\Delta \H = 2\pi  \muup \ge 0$ and in particular $\H$ is superharmonic and harmonic in the complement of the support of $\muup$. It remains to show that $\H \geq \psi$ in $\R^2$ and $\H = \psi$ on the support of $\muup$. This is the statement of Lemma~\ref{lem2.1}, below. In Lemma~\ref{l.vi}, we prove that $\H$ satisfies~\eqref{e.vi}.

In what follows, ``q.e." is short for \emph{quasi-everywhere} which means ``except possibly in of a set of capacity zero."
A set $K$ is of \emph{capacity zero} if there does not exist 
a probability measure $\nu$ with support in a compact subset of 
$K$ such that $\iint -\log |x-y|\, d\nu(x)\, d\nu(y)$ 
is finite (c.f.~\cite{ST}). In particular, a set of capacity zero 
has zero Lebesgue measure and we note that a measure 
$\mu$ satisfying $\I[\mu] < +\infty$ does not charge sets of zero capacity. 

We first observe that $\muup (\overline U) =p$. If, on the contrary, $\muup(\overline U) >p$, then for small enough $t>0$, we have $(1-t) \muup + t \mu_0 \in \mathcal C$. The strict convexity of $\I$ then contradicts the minimality of $\muup$:
$$\I [ (1-t) \muup + t\mu_0] < (1-t)  \I [\muup] + t \I [\mu_0] \le \I [\muup].$$

\begin{lem}\label{lem2.1}
There exist $c_1>c_2 \geq -\infty$ with $c_2=-\infty$ if $p=1$, such that
\begin{equation}
\label{2.1}
2 \H(x) + |x|^2 \ge c_1 \quad \mbox{q.e. in} \ \overline U \quad \mbox{and} \quad  2\H(x) +|x|^2 =c_1 \quad  \text{q.e.} \ \text{in} \ \supp (\muup) \cap \overline U
\end{equation}
and
\begin{equation}
\label{2.3}
2\H(x) + |x|^2 \ge c_2 \quad \mbox{q.e. in} \ \C\setminus \overline U  \quad   \mbox{and} \quad 2\H(x) + |x|^2 = c_2 \quad  \text{q.e.} \ \text{in}  \  \supp(\muup)\setminus \overline U.
\end{equation}
\end{lem}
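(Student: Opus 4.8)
# Proof Proposal for Lemma 2.1

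The plan is to derive the variational characterization of the minimizer $\muup$ via a perturbation argument, exploiting the Euler--Lagrange conditions associated with minimizing the strictly convex functional $\I$ over the convex set $\mathcal C$. First I would introduce the Lagrange multiplier: since $\muup$ minimizes $\I$ subject to the single linear inequality constraint $\muup(\overline U)\geq p$, and we have already observed that this constraint is active (i.e.\ $\muup(\overline U)=p$), there should exist a multiplier $\lambda\geq 0$ so that $\muup$ is an unconstrained minimizer of $\mu\mapsto \I[\mu] - 2\lambda\,\mu(\overline U)$ over $\mathcal P(\C)$. Concretely, I would make this rigorous by testing the minimality of $\muup$ against competitors of the form $\mu_t = (1-t)\muup + t\nu$ for $\nu\in\mathcal P(\C)$ with $\I[\nu]<\infty$, computing $\frac{d}{dt}\big|_{t=0^+}\I[\mu_t]$, and using convexity to turn the one-sided derivative inequality into a pointwise statement. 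This requires care because $\I$ contains the logarithmic energy term, so I would first justify that the linear functional $\mu\mapsto \int (2\H + |x|^2)\,d\mu$ is the Gâteaux derivative of $\I$ at $\muup$ in admissible directions, using that $\I[\muup]<\infty$ and the definition~\eqref{defH} of $\H$.

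Next I would extract the two-sided conclusions. From the first variation inequality, testing with $\nu$ supported in $\overline U$ gives that $2\H(x)+|x|^2$ is bounded below on $\overline U$ by a constant (call it $c_1$) q.e., with equality q.e.\ on $\supp\muup\cap\overline U$; testing with $\nu$ supported in $\C\setminus\overline U$ gives the analogous statement with a constant $c_2$ on $\C\setminus\overline U$. The precise values are $c_1 = \inf_{\overline U}(2\H+|x|^2)$ taken in the q.e.\ sense, and similarly for $c_2$; the equality on the support follows because if strict inequality held on a set of positive $\muup$-measure, one could shift mass off that set to lower the energy. The relation $c_1 > c_2$ comes from the active constraint: because the constraint $\mu(\overline U)\geq p$ is binding with $p>\mu_0(U)$, the multiplier is strictly positive, which forces the obstacle level inside $\overline U$ to sit strictly above the level outside — intuitively, mass is being ``pushed into'' $\overline U$ against its natural tendency. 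For the case $p=1$, the measure $\muup$ is entirely supported in $\overline U$, so there is no constraint coming from $\C\setminus\overline U$ and the lower bound there degenerates; I would record this as $c_2 = -\infty$, meaning the second inequality in~\eqref{2.3} is vacuous and the support of $\muup$ outside $\overline U$ is empty.

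The q.e.\ (rather than a.e.)\ refinement is where the technical care concentrates. The natural first-variation argument directly yields inequalities Lebesgue-a.e., but to upgrade to q.e.\ I would use that $\H$, being the logarithmic potential of a measure of finite energy, is quasi-continuous, so that the set where $2\H+|x|^2 < c_1$ inside $\overline U$ is quasi-open; if it had positive capacity it would support a probability measure of finite energy, and perturbing $\muup$ in that direction would strictly decrease $\I$ (using strict convexity and the finiteness of the perturbing measure's energy), contradicting minimality. The same capacitary argument handles the equality on the support. I expect this capacity-theoretic upgrade — making precise that ``a competitor can be built from any set of positive capacity'' and that $\I[\muup]<\infty$ ensures $\muup$ does not charge the exceptional sets — to be the main obstacle, since everything else is a fairly standard Lagrangian/first-variation computation for a strictly convex energy.
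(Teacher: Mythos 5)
Your plan follows the same skeleton as the paper: first variation of $\I$ along segments $(1-t)\muup+t\nu$, a capacity-theoretic argument to upgrade to q.e.\ statements, and the observation that $\muup(\overline U)=p$ exactly. But there are two places where, as written, the proposal has real gaps.

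First, the Lagrange-multiplier framing is not free. You assert ``there should exist a multiplier $\lambda\ge0$ so that $\muup$ is an unconstrained minimizer of $\I[\mu]-2\lambda\mu(\overline U)$'' and then propose to test against arbitrary $\nu\in\mathcal P(\C)$. But the existence of such a $\lambda$ in this infinite-dimensional convex program is precisely what needs proof, and if you instead stay in the constrained problem then ``testing with $\nu$ supported in $\C\setminus\overline U$'' is illegal: such a $\nu$ has $\nu(\overline U)=0<p$, so $\mu_t\notin\mathcal C$. The paper sidesteps both issues by directly constructing competitors that remain in $\mathcal C$: for the $c_1$ inequality, $\nu$ is the sum of a finite-energy measure $\nu_E$ of mass $p$ on a hypothetical bad compact $E\subseteq\overline U$ together with $\muup\vert_{\C\setminus\overline U}$; for $c_2$, $\nu$ agrees with $\muup$ on $\overline U$ and redistributes the remaining mass $1-p$ outside. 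This also produces the explicit formulas $c_1 = p^{-1}\int_{\overline U}(2\H+|x|^2)\,d\muup$ and $c_2 = (1-p)^{-1}\int_{\C\setminus\overline U}(2\H+|x|^2)\,d\muup$, from which the equality statements on $\supp\muup$ follow by integrating the inequalities against $\muup$. Your ``$c_1=\inf_{\overline U}(\cdots)$'' is a posteriori equivalent, but it is the explicit integral form that makes the equality-on-the-support step one line.

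Second, the claim $c_1>c_2$ is not adequately justified by ``the constraint is binding so the multiplier is strictly positive.'' A binding constraint can have a zero multiplier. What you actually need is that the \emph{unconstrained} minimizer $\mu_0$ is infeasible ($\mu_0(\overline U)=\mu_0(U)<p$), so $\muup\neq\mu_0$; the paper turns this into an inequality by computing $\frac{d}{dt}\big\vert_{t=0}\I[(1-t)\muup+t\mu_0]<0$ (strict convexity plus $\I[\muup]>\I[\mu_0]$), which reads $\int(2\H+|x|^2)\,d(\mu_0-\muup)<0$, and then substituting the bounds from \eqref{2.1}--\eqref{2.3} to get $c_1\mu_0(U)+c_2\mu_0(\C\setminus U)<pc_1+(1-p)c_2$, hence $c_1>c_2$ since $p>\mu_0(U)$. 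If you prefer the multiplier language, you could instead argue: if $c_1=c_2$ then $\H$ satisfies the unconstrained Euler--Lagrange conditions, so $\muup=\mu_0$ by uniqueness of the unconstrained minimizer, contradiction. Either way, some explicit step is needed; as written the proposal waves at it.
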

\begin{proof}
We argue by the standard method of variations (one could also proceed by computing the convex dual of the optimization problem). Select~$\nu\in \mathcal C$ such that $\I[\nu]<+\infty$ and observe that, for every $t \in [0,1]$,
\begin{equation}\label{}
\I \left[(1-t)\muup + t \nu\right] \ge \I [\muup].
\end{equation}
Expanding in powers of $t$ and letting $t \to 0$, we discover that
\begin{equation}\label{ineq}
\int_\C \left( 2\H(x) + |x|^2 \right) \, d (\nu - \muup) (x)   \ge 0
\end{equation}
for every $\nu \in \mathcal C$ for which $\I[\nu] < \infty$.

We claim that
\begin{equation}\label{ineq1}
 2\H(x)+ |x|^2   \ge \frac1p \int_{\overline{U}} \left(2\H(x) + |x|^2 \right) \, d\muup(x)=: c_1\quad \text{q.e. in } \ \overline U.
\end{equation}
If~\eqref{ineq1} is false, then there
exists a compact
set $E\subseteq \overline U$ of positive capacity on 
which the opposite inequality holds. By the 
characterization of positive capacity, there exists a measure $\nu_E$ 
of mass~$p$ supported in $E$ with $\iint -\log |x-y|\, d\nu_E(x)\, d\nu_E(y) <\infty$. We obtain a contradiction to~\eqref{ineq} by taking $\nu$ to be the sum of $\nu_E$ and the restriction of $\muup$ to $\C \setminus \overline U$. This establishes~\eqref{ineq1}, and integrating it against $\muup$ (which does not charge sets of zero capacity since it has finite energy) yields~\eqref{2.1}.

If $p=1$ we stop the reasoning here and all the claims are true since \eqref{2.3} holds with $c_2=-\infty$. If $p<1$, 
we obtain~\eqref{2.3} by a similar argument as above, applying~\eqref{ineq} to a measure $\nu$ which coincides with~$\muup$ in~$\overline {U}$ and has mass~$1-p$ in~$\C \setminus \overline{U}$, resulting with
$$c_2:=\frac1{1-p} \int_{\C\setminus 
  {\overline{U}}} \left(2\H(x) + |x|^2 \right) \, d\muup(x).$$ 

It remains to prove that $c_1>c_2$. By strict convexity of $\I$ and the fact that $\I[ \muup] > \I[\mu_0]$, we have
 \begin{equation*}\label{}
\left. \frac{d}{dt} \right\vert_{t=0} \I \left[ (1-t) \muup + t\mu_0 \right] <0.
\end{equation*}
This says, by the same computation as above, that
\begin{equation*}\label{}
\int_\C \left(2\H (x) + |x|^2 \right) \, d(\mu_0- \muup)(x)<0.
\end{equation*}
Using~\eqref{2.1} and~\eqref{2.3}, we obtain
\begin{equation*}\label{}
c_1 \mu_0(U) + c_2 \mu_0(\C \setminus U) < p c_1+ (1-p) c_2.
\end{equation*}
Since $p> \mu_0(U)$ by assumption, we deduce that $c_1>c_2$.
\end{proof}

In the special case $p=1$, the result in Lemma~\ref{lem2.1} 
is actually a characterization.
\begin{lem}
  \label{lem2.1p=1}
  Assume $p=1$ and that $\mu\in \mathcal C$ satisfies
  \begin{equation}
\label{2.1p=1}
2 H^\mu(x) + |x|^2 \ge c \quad \mbox{q.e. in} \ \overline U \quad \mbox{and} \quad  2H^\mu(x) +|x|^2 =c \quad  \text{q.e.} \ \text{in} \ 
\supp (\mu) \cap \overline U\,,
\end{equation}
for some $c\in \R$.
Then, $\mu=\muup$.
\end{lem}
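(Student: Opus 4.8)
The plan is to show that any $\mu\in\mathcal C$ satisfying~\eqref{2.1p=1} is a minimizer of $\I$ over $\mathcal C$; since $\muup$ is the \emph{unique} minimizer of $\I$ over $\mathcal C$, this forces $\mu=\muup$. Because $p=1$, every element of $\mathcal C$ is a probability measure concentrated on $\overline U$; in particular $\supp\mu\subseteq\overline U$ and $\supp\muup\subseteq\overline U$. The first step is to establish that $\I[\mu]<+\infty$ (I indicate the argument in the last paragraph below); granting this, $\mu$ does not charge sets of capacity zero.

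Write $h^\mu(x):=2H^\mu(x)+|x|^2$, and recall that $\I[\mu]=\int_\C H^\mu\,d\mu+\int_\C|x|^2\,d\mu$. Expanding the quadratic functional $\I$ about $\mu$ gives, for every $\nu\in\mathcal C$ with $\I[\nu]<+\infty$,
\begin{equation*}
\I[\nu]-\I[\mu]=\iint_{\C\times\C}-\log|x-y|\,d(\nu-\mu)(x)\,d(\nu-\mu)(y)\;+\;\int_\C h^\mu\,d(\nu-\mu).
\end{equation*}
The first term on the right is nonnegative, because the logarithmic energy of a finite-energy, compactly supported signed measure of total mass zero is $\ge 0$ --- this is exactly the positivity underlying the strict convexity of $\I$ used earlier. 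For the second term, \eqref{2.1p=1} gives $h^\mu\ge c$ q.e.\ on $\overline U\supseteq\supp\nu$ and $h^\mu=c$ q.e.\ on $\supp\mu$; since neither $\nu$ nor $\mu$ charges sets of capacity zero, $\int_\C h^\mu\,d\nu\ge c=\int_\C h^\mu\,d\mu$, i.e.\ $\int_\C h^\mu\,d(\nu-\mu)\ge 0$. Hence $\I[\nu]\ge\I[\mu]$ for every $\nu\in\mathcal C$, so $\mu$ minimizes $\I$ over $\mathcal C$, and therefore $\mu=\muup$.

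An equivalent, fully symmetric variant avoids invoking uniqueness of the minimizer: apply Lemma~\ref{lem2.1} with $p=1$ to obtain, for $\muup$ with a constant $c_1$, the analogues $h^{\muup}\ge c_1$ q.e.\ on $\overline U$ and $h^{\muup}=c_1$ q.e.\ on $\supp\muup$. Integrating the four (in)equalities for $h^\mu$ and $h^{\muup}$ against $\mu$ and $\muup$ and adding them, the $\int|x|^2$ contributions cancel and one obtains $\iint_{\C\times\C}-\log|x-y|\,d(\mu-\muup)(x)\,d(\mu-\muup)(y)\le 0$; combined with the nonnegativity of this logarithmic energy, this again gives $\mu=\muup$.

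I expect the main obstacle to be the integrability bookkeeping promised above, namely that~\eqref{2.1p=1} forces $\I[\mu]<+\infty$ (so the expansion of $\I$ is meaningful) and that $\mu$ does not charge the capacity-zero exceptional sets in the q.e.\ statements. For the former: $H^\mu$ is superharmonic, so $\{H^\mu=+\infty\}$ is polar; and the relation $H^\mu(x)=\tfrac12(c-|x|^2)$ forced by~\eqref{2.1p=1} on $\supp\mu$ is incompatible with $\supp\mu$ being unbounded, since a Newtonian potential of a probability measure cannot decay quadratically at infinity. Hence $\supp\mu$ is bounded and $\mu$ has finite logarithmic energy. Once these points are in place, the rest is the routine ``variational inequality $\Rightarrow$ optimality $\Rightarrow$ uniqueness by strict convexity'' pattern, parallel to the proof of Lemma~\ref{lem2.1}.
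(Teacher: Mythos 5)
Your argument is correct and is a direct first-order--optimality version of what the paper does by contradiction. The paper's proof sets $\mu_t=t\mu+(1-t)\muup$, lower-bounds $\I[\mu_t]$ using~\eqref{2.1} and~\eqref{2.1p=1}, observes that the lower bound equals exactly $t\I[\mu]+(1-t)\I[\muup]$, and contradicts strict convexity; you instead expand $\I[\nu]-\I[\mu]$ as the (nonnegative) logarithmic energy of $\nu-\mu$ plus the (nonnegative, by the obstacle condition) linear term $\int h^\mu\,d(\nu-\mu)$, conclude $\mu$ is a minimizer over $\mathcal C$, and invoke uniqueness. The two are interchangeable --- both ultimately rest on the positivity of the logarithmic energy of a mean-zero, finite-energy signed measure, which the paper packages as ``strict convexity'' and you unpack explicitly. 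Your symmetric variant in the third paragraph (integrating the four (in)equalities for $h^\mu$ and $h^{\muup}$ against $\mu$ and $\muup$ to get $\iint-\log|x-y|\,d(\mu-\muup)\,d(\mu-\muup)\le0$) is also correct and is arguably the cleanest bookkeeping of all.

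The one real weak spot is the integrability paragraph. You are right that the argument needs $\I[\mu]<\infty$ (equivalently, that $\mu$ does not charge polar sets), and right that the paper assumes this silently --- indeed the lemma as stated is not literally true without it: if $0\in\overline U$ then $\mu=\delta_0$ satisfies~\eqref{2.1p=1} vacuously, since $\supp\mu=\{0\}$ is polar so the ``q.e.\ in $\supp\mu\cap\overline U$'' clause is empty, yet $\delta_0\neq\muup$. However, your proposed derivation of $\I[\mu]<\infty$ does not hold up. Boundedness of $\supp\mu$ does not imply finite logarithmic energy ($\delta_0$ again is a counterexample), and the earlier step, which infers boundedness of $\supp\mu$ from the relation $H^\mu=\tfrac12(c-|x|^2)$ ``on $\supp\mu$,'' already requires knowing that $\mu$-a.e.\ coincides with q.e.\ on $\supp\mu$, i.e.\ that $\mu$ does not charge polar sets --- the very thing you are trying to prove. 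The honest fix is simply to add $\I[\mu]<\infty$ to the hypotheses of the lemma; this costs nothing in the only place it is used, Proposition~\ref{prop-example}, where the candidate measure $\sigma_a$ manifestly has finite energy.
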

\begin{proof}
  Assume not, and set, for $t\in (0,1)$, 
  $$\mu_t:=t\mu+(1-t)\muup.$$
  We have
  \begin{align}
    \label{eq2.3p=1}
  \I\left[\mu_t\right]&= 
  t
  \int (H^\mu(x)+|x|^2) \, d\mu_t(x)+(1-t)
  \int (\H(x)+|x|^2)\, d\mu_t(x)\nonumber \\
  &=\frac{t}{2}
  \int (2H^\mu(x)+|x|^2) \, d\mu_t(x)+\frac{(1-t)}{2}
  \int (2\H(x)+|x|^2) d\mu_t(x)+\frac12\int |x|^2 \, d\mu_t(x) \nonumber \\
 &\geq 
 \frac{t}{2} \left(c+\int |x|^2 \, d\mu(x)\right)+
 \frac{(1-t)}{2}\left(c_1+\int |x|^2 \, d\muup(x)\right)\,,
 \end{align}
where~\eqref{2.1} and~\eqref{2.1p=1} were used in the inequality. On the other hand, integrating~\eqref{2.1} and~\eqref{2.1p=1}  with respect to $\mu$ and $\muup$ gives
\begin{equation*}
\I[\mu]=\frac{1}{2}\left( c+\int |x|^2 d\mu(x)\right) \quad \mbox{and} \quad 
\I[\muup]=\frac{1}{2} \left( c+\int |x|^2 d\muup(x) \right).
\end{equation*}
Substituting in~\eqref{eq2.3p=1} then gives
\begin{equation*}\label{}
\I[\mu_t]\geq t \I[\mu]+(1-t) \I[\muup],
\end{equation*}
which contradicts the strict convexity of $\I$ unless $\mu=\muup$.
\end{proof}

\begin{lem}
\label{l.vi}
The function $\H$ is the unique element of $\mathcal K$ which satisfies~\eqref{e.vi}.
\end{lem}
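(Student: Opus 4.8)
The plan is to treat the statement in two halves: that $\H$ itself belongs to $\mathcal K$ and satisfies~\eqref{e.vi}, and that it is the only such function. For the first half, I would start by confirming $\H \in \mathcal K$. Since $\I[\muup] < +\infty$, the logarithmic potential $\H$ has locally finite Dirichlet energy, so $\H \in H^1_{\mathrm{loc}}(\R^2)$; the condition that $\H - \H$ have bounded support is vacuous; and the inequality $\H \geq \psi$ q.e.\ in $\R^2$ is exactly the pair of lower bounds~\eqref{2.1}--\eqref{2.3} of Lemma~\ref{lem2.1} (reading the bound outside $\overline U$ as empty when $p = 1$, $c_2 = -\infty$).

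To check~\eqref{e.vi}, fix $v \in \mathcal K$ and set $\varphi := v - \H \in H^1(\R^2)$, which has bounded support and hence a quasicontinuous representative. Using $-\Delta\H = 2\pi\muup$ in the distributional sense and integrating by parts against $\varphi$ — legitimate because $\varphi$ is compactly supported, so the slow decay of $|\nabla\H|$ at infinity is irrelevant — I obtain
\[
\int_{\R^2} \nabla\H\cdot\nabla(v - \H)\,dy \;=\; 2\pi\int_{\R^2}(v - \H)\,d\muup.
\]
Now I invoke the equalities of Lemma~\ref{lem2.1}: on $\supp(\muup)\cap\overline U$ one has $2\H + |x|^2 = c_1$ and on $\supp(\muup)\setminus\overline U$ one has $2\H + |x|^2 = c_2$, i.e.\ $\H = \psi$ q.e.\ on $\supp\muup$; and since $\muup$ does not charge sets of zero capacity, the q.e.\ bound $v \geq \psi$ upgrades to $v \geq \psi$ $\muup$-a.e. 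Hence the right-hand side equals $2\pi\int(v - \psi)\,d\muup \geq 0$, which is~\eqref{e.vi}. For uniqueness, suppose $w \in \mathcal K$ also satisfies~\eqref{e.vi}. Testing the inequality for $\H$ with the competitor $v = w$, testing the inequality for $w$ with $v = \H$ (allowed since $\H \in \mathcal K$ by the previous step), and adding, I get $\int_{\R^2}|\nabla(w - \H)|^2\,dy \leq 0$; thus $w - \H$ has vanishing gradient, so it is constant, and having bounded support it must vanish identically.

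The step I expect to be the main obstacle is the rigorous justification of the Green's identity displayed above, and in particular making sense of the pairing between the measure $\muup = -\tfrac{1}{2\pi}\Delta\H$ and the function $v - \H$: one needs that $H^1$ functions possess quasicontinuous versions and are therefore well-defined $\muup$-a.e., so that the inequality $v \geq \psi$ (which only holds quasi-everywhere) transfers to a $\muup$-a.e.\ inequality, and one needs the boundary-free form of the identity to hold because $v - \H$ is compactly supported. This is precisely where the capacity-theoretic facts recalled just before Lemma~\ref{lem2.1} — that finite-energy measures do not charge polar sets, and that $\H$ itself is quasicontinuous — carry the argument; once they are in place, the rest is the routine convexity manipulation standard for variational inequalities.
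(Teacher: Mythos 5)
Your proof follows the paper's strategy in outline---integrate by parts to get $\int\nabla\H\cdot\nabla(v-\H)=2\pi\int(v-\H)\,d\muup$, use the q.e.\ equalities of Lemma~\ref{lem2.1} together with the fact that $\muup$ charges no polar sets to see that the right side is nonnegative, and finish uniqueness by the standard symmetrization trick---but there are two places where your argument and the paper's part ways. First, your claim that $\I[\muup]<\infty$ gives $\nabla\H\in L^2_{\mathrm{loc}}$ ``directly'' hides a subtraction: for a probability measure $|\nabla\H|\sim|x|^{-1}$ at infinity, so $\nabla\H\notin L^2(\R^2)$ globally and the energy identity does not apply to $\muup$ as is; the paper instead applies the Saff--Totik identity to the zero-mass signed measure $\muup-\mu_0$ and adds back $\nabla H^{\mu_0}\in L^\infty_{\mathrm{loc}}$. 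Second, and more substantively, the step you flag as the ``main obstacle''---passing from the distributional identity $-\Delta\H=2\pi\muup$ tested against $C_c^\infty$ to an identity tested against an arbitrary $H^1$ function of bounded support---is precisely where the paper invests its effort, and you leave it as a remark. You invoke quasicontinuous representatives of $H^1$ functions and their compatibility with finite-energy measures, which is a correct framework from potential/Dirichlet-form theory; the paper instead carries out a concrete density argument, approximating $v$ by $v_{\ep,\delta}:=\H+(v-\H)\ast\eta_\ep+\delta\chi_R$, where the bump $\delta\chi_R$ keeps the mollified competitor above the obstacle, so the inequality need only ever be verified for smooth $\varphi$. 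Both routes work; the paper's is self-contained, while yours is shorter if the abstract pairing lemma is cited. As written, though, the deferred step is the real content separating this from a formal computation and would need to be spelled out in a complete proof.
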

\begin{proof}
First, we show that $\nabla\H$ is in $L^2_{\mathrm{loc}} (\R^2;\, \R^2)$.
To see this first note that if $\rho$ is a compactly supported Radon measure on $\R^2$ with $\int \, d\rho=0$ satisfying $\iint - \log |x-y| \, d\rho(x)_+\, d\rho(y)_+ <+\infty$, $\iint - \log |x-y| \, d\rho_-(x) \, d\rho_-(y) <+\infty$  and $H^\rho$ is the potential generated by $\rho $, defined as usual  by $H^\rho (x):= - \int \log |x-y|\, d \rho(y)$, then  we have 
\begin{equation}\label{e.hro}
\int_{\R^2} \int_{\R^2} - \log |x-y|\, d\rho(x)\, d\rho(y)= \frac{1}{2\pi}\int_{\R^2} |\nabla H^\rho(x) |^2\,dx.
\end{equation}
See the proof of \cite[Lemma 1.8]{ST} where, more precisely,  it is shown that 
\begin{equation*}
\int_{\R^2} \int_{\R^2} - \log |x-y| \, d\rho(x) \, d\rho(y)= \frac{1}{2\pi} \int_{\R^2} \left(\int_{\R^2} \frac{1}{|x-y|} \, d \rho(y)\right)^2 \, dx.
\end{equation*}
Then~\eqref{e.hro} follows, since $\int_{\R^2} \frac{(x-y) \d \rho(y)}{|x-y|^2}= - \nab \H(x)$ in the distributional sense.

Next, consider  $\mu_0$  (defined in \eqref{mu0}) and its associated potential $H^{\mu_0}$, which belongs to $C^{1}(\R^2)$ by direct computation. The measure $\rho:=\muup- \mu_0$ is a compactly supported Radon measure with~$\int d\rho =0$. In addition, since $\I[\muup]<\infty$ and $\I [\mu_0]<\infty$, it satisfies the desired assumptions. Thus \eqref{e.hro} holds, which proves that $\nabla \H - \nab H^{\mu_0}$ is in $L^2(\R^2;\,\R^2)$.
Since $\nab H^{\mu_0}$ is in $L^\infty_{\mathrm{loc}}(\R^2;\,\R^2)$,  it follows that $\nabla \H \in L^2_{\mathrm{loc}}(\R^2;\,\R^2)$, as claimed.

We next turn to the proof that $\H$ satisfies \eqref{e.vi}. Pick $v\in \mathcal K$ and set $\varphi:=v-\H$. Observe that, in the case that $\varphi$ is smooth and compactly supported, we have
\begin{equation}\label{ineg}
\int_{\R^2} \nab \H \cdot \nab \varphi = 2 \pi \int_{\R^2} \varphi \, d\muup \ge 0.
\end{equation}
The last inequality of~\eqref{ineg} is true because $\varphi = v- \H \ge 0$ q.e.~in $\supp (\muup)$ due to the fact that $\H=\psi$ q.e.~in $\supp (\muup)$ and $ v \ge \psi $ q.e.~in~$\R^2$. The conclusion \eqref{ineg} follows since $\muup$ does not charge sets of capacity zero, and this proves that~\eqref{e.vi} is satisfied under the additional assumption that~$\varphi$ is smooth and compactly supported.

To obtain~\eqref{e.vi} for general $v\in \mathcal K$, we just need to show that the subset of $\mathcal K$ consisting of $v$ for which $v-\H$ is smooth is dense in $\mathcal K$ with respect to the topology of $H^1$. To see this, we fix $v\in \mathcal K$ and select $R>1$ so large that $v-\H$ is supported in $B_{R/2}$. Then we define
\begin{equation*}\label{}
v_{\ep,\delta} := \H + (v-\H)\ast \eta_\ep + \delta \chi_{R},
\end{equation*}
 where $\ep,\delta > 0$, $\eta_\ep$ is the standard mollifier and $\chi_R$ is a smooth function supported in $B_{2R}$ with $0\leq \chi_{R} \leq1$ and $\chi \equiv 1$ in $B_R$. It is clear that $v_{\ep,\delta}-\H$ is smooth. If $\ep > 0$ is sufficiently small relative to $\delta$, then $v_{\ep,\delta} > \psi$ and hence $v_{\ep,\delta} \in \mathcal K$. Finally, we note that if $\ep > 0$ is small enough relative to $\delta$, then $\|v_{\ep,\delta} - v\|_{H^1} < C\delta$. This concludes the proof of~\eqref{e.vi}.

The uniqueness of the solution to \eqref{e.vi} is standard: if $v$ is another solution, then first \eqref{e.vi} holds, and second, testing \eqref{e.vi} (this time holding for $v$)  with $\H$ yields 
$$\int_{\R^2} \nab v \cdot \nab (\H - v) \ge 0, $$
Adding the two relations yields 
$\int_{\R^2} \nab (\H-v) \cdot \nab (v - \H) \ge 0$
which immediately implies that $\H-v=0 $ in $ H^1(\R^2)$.
\end{proof}

\subsection{{Proof of Theorem~\ref{T}}}

We now present the proof of the main result.

\begin{proof}[{Proof Theorem~\ref{T}}]

The variational characterization provided by Lemma~\ref{l.vi} asserts that $\H$ is a ``solution of the obstacle problem with obstacle $\psi$" (with $\psi$ defined in~\eqref{e.psi}) and allows us to use the classical regularity theory for the obstacle problem. In particular, we see that, away from $\partial U$, $\H$ is locally~$C^{1,1}$ since $\psi$ is smooth there. This optimal regularity for the obstacle problem is an old result of Frehse~\cite{F}; for a proof, see also~\cite[Theorem~2]{C}.

In neighborhoods of $\partial U$, in which $\psi$ has a jump discontinuity, we cannot expect $\H$ to be so regular, in general. However, a result of Frehse and Mosco~\cite[Theorem 3.2]{FM}  implies that $\H$ is in fact H\"older continuous in a neighborhood of~$\partial U$. To verify that this result applies, we have to check that $\psi$ verifies the ``unilateral H\"older condition" summarized in lines (3.6) and (3.7) of~\cite{FM}, which in our case comes down to a smoothness condition on $\partial U$. Indeed, it is easy to check that all we need for the obstacle $\psi$ in~\eqref{e.psi} to satisfy this condition is that $\partial U$ satisfies an exterior cone condition, so our assumption that $\partial U$ is $C^{1,1}$ certainly suffices. We conclude that $\H \in C^{0,\alpha}_{\mathrm{loc}}(\R^2)$ for some $\alpha>0$. In particular,~$\H$ is continuous.

Since~$\H$ is continuous and~$\muup$ does not charge sets of capacity zero, we may upgrade~Lemma~\ref{lem2.1} by removing the ``q.e." qualifier from~\eqref{2.1} and~\eqref{2.3}. We have:
\begin{equation}
\label{2.1up}
2 \H(x) + |x|^2 \ge c_1 \quad \mbox{in} \ \overline U \quad \mbox{and} \quad  2\H(x) +|x|^2 =c_1  \quad \mbox{in} \ \supp (\muup) \cap \overline U,
\end{equation}
and
\begin{equation}
\label{2.3up}
2\H(x) + |x|^2 \ge c_2 \quad \mbox{in} \ \R^2 \setminus \overline U  \quad   \mbox{and} \quad 2\H(x) + |x|^2 = c_2 \quad \mbox{in}  \  \supp(\muup)\setminus \overline U.
\end{equation}

Define $V$ to be the interior of the support of $\muup$. 
Observe that $V$ is bounded because $\muup$ is compactly supported 
since $\I[\muup]<\infty$ and $|x|^2-\int \log|x-y|\,d\muup(y)\to {\infty}$
as $|x|\to\infty$. According to~\eqref{defH},~\eqref{2.1up}~\eqref{2.3up} and the fact that $\H$ is $C^{1,1}_{\mathrm{loc}}=W^{2,\infty}_{\mathrm{loc}}$ in both $U$ and $\R^2\setminus \overline{U}$, we find that
\begin{equation}\label{deltah}
\muup = -\frac{1}{2\pi}\Delta \H =  \frac{1}{\pi}\indic_V \quad \mbox{in} \ U \cup (\R^2 \setminus \overline{U}).
\end{equation}
Let $\muup_{\mathrm{reg}}$ denote the Lebesgue measure on $V$ multiplied by $1/\pi$ and set $\muup_S:= \muup - \muup_{\mathrm{reg}}$. It is clear by~\eqref{deltah} that $\muup_S$ is supported on $\partial U$.

Since $\H$ is continuous and above the obstacle function $\psi = \frac12(c_1-|x|^2)$ in $\overline U$, it must be strictly larger than $\frac12(c_2-|x|^2) = \psi$ in a neighborhood of $\overline U$. In other words, there is a gap between~$\partial U$ and~$V\setminus U$.

We next argue that $\H\in C^{0,1}_{\mathrm{loc}}(\R^2)$, which amounts to proving that $\H$ is Lipschitz continuous in a neighborhood of $\partial U$. To show this, we introduce a modification~$\hat\psi$ of~$\psi$ which is locally Lipschitz continuous in~$\R^2$ and such that $\H$ satisfies~\eqref{e.vi} with $\hat\psi$ in place of $\psi$. The Lipschitz continuity of $\H$ then follows from~\cite[Theorem 2(a)]{C}. It is clear from~\eqref{e.vi} or~\eqref{e.optp} that any $\hat\psi$ will do, provided that we modify $\psi$ only in the region in which $\{\H > \psi\}$ and in such a way that $\H \geq \hat \psi$ in $\R^2$. We define
\begin{equation*}\label{}
\hat\psi(y):= \begin{cases} h(y) & \mbox{if} \ y\in U^\delta\setminus U, \\
\psi(y) & \mbox{if} \ y \in U \cup (\R^2\setminus U^\delta),
\end{cases}
\end{equation*}
where $U^\delta:= \{ z\in \R^2\,:\, \dist(z,U ) < \delta\}$, $\delta >0$ is small enough that $\partial U^\delta$ is smooth and $\H > \psi$ in $U^\delta \setminus U$, and $h$ is the unique harmonic function in $U^\delta \setminus U$ which is equal to $\psi$ on $\partial (  U^\delta \setminus U)$. In view of the fact that $\H$ is superharmonic in $\R^2$, it is clear that $\hat\psi\leq \H$ in $\R^2$ and so $\psi$ has the desired properties. We deduce that $\H$ is locally Lipschitz. In particular, we have $\nabla \H \in L^\infty_{\mathrm{loc}}(\R^2;\R^2)$ by Rademacher's theorem (c.f.~\cite[Chapter 3]{EG}).

We next show that $\H$ has a Neumann trace on each side of $\partial U$. We mean that $\partial_\nu F$ exists and belongs to $L^2_{\mathrm{loc}}(\partial U)$, where $F$ is either the restriction of $\H$ to $\overline U$ or to 
$\R^2\setminus U$, and $\nu$ denotes the outer unit normal to $\partial U$. We only prove the claim in the case that $F = \H\vert_{\overline U}$, since the other case is even easier (due to the gap,~\eqref{e.gap}). We first give the argument in the case that $U$ is bounded. We write $\H\vert_{\overline U} = \H_1 + \H_2$ in $\overline U$, where $\H_1$ is harmonic in $U$ with $\H_1 = \H$ on $\partial U$, and $\H_2 := \H - \H_1$. Note that $\H \in W^{1,\infty}(\partial U)$ since $\H$ is Lipschitz. Next, we recall that the Dirichlet-to-Neumann map (also called the Poincar\'e-Steklov operator) with respect to the Laplacian is a continuous linear operator from~$H^{1}(\partial \Omega)$ to~$L^2(\partial \Omega)$, for any bounded $C^{1,1}$ domain~$\Omega$ (c.f.~\cite[Theorem~4.21]{M}). It follows that $\partial_\nu \H_1 \in L^2(\partial U)$. By standard elliptic regularity (c.f.~\cite[Theorem~9.15]{GT}), we have that $\H_2\in W^{2,p}(U)\cap W^{1,p}_0(U)$ for all $1<p <\infty$. Since $W^{2,p}(U) \hookrightarrow C^{1,\alpha}(\overline U)$ for $0<\alpha<1-1/p$ by 
Sobolev 
embedding (c.f.~\cite[Theorem 5.4,~Part II]{A}), we deduce that $\partial_\nu \H_2 \in C^\alpha(\partial U)$ for all $0<\alpha<1$. The claim that $\partial_\nu\left( \H\vert_{\overline U} \right) \in L^2(\partial U)$ follows. 

For the general case in which $U$ may be unbounded, we fix $R>0$ and select a $C^{1,1}$ domain $\widetilde U_R$ such that $U \cap B_{2R} \subseteq \widetilde U_R$ and apply the above argument to $\zeta_R \H$, where $\zeta_R$ is a smooth cutoff function satisfying $0\leq \zeta_R\leq1$, $\zeta_R \equiv 1$ on $B_R$, and $\zeta_R \equiv 0$ in $\Rd\setminus B_{2R}$.

We show next that $\muup_S$ is absolutely continuous with respect to the one-dimensional Hausdorff measure restricted to $\partial U$, with a density belonging to $L^2(\partial U)$. 
{Fix a compactly supported smooth function $\zeta \in C^\infty(\Rd)$. Using that $|\nabla \H| \in L^\infty_{\mathrm{loc}}(\R^2)$ and the fact proved in the previous paragraph that $\H$ has a Neumann trace in $L^2(\partial U)$ from both sides of $\partial U$, we may integrate by parts to find  
\begin{align*}
\int_{\R^2} \nabla \H(x) \cdot \nabla \zeta(x)\,dx & = \int_{\overline {U}}
\nabla \H(x)  \cdot \nabla \zeta(x)\,dx + \int_{\R^2 \backslash \overline{U}} 
\nabla \H(x) \cdot \nabla \zeta(x)\,dx \\
& =  - \int_{\overline U} \zeta(x) \Delta \H(x)\,dx+
 \int_{\partial U}  \zeta(x)\, \partial_\nu\left( \H\vert_{\overline U} \right) (x) \, d\mathcal H^1(x)\\& \qquad \qquad
- \int_{\R^2 \backslash \overline{U}} \zeta(x) \Delta \H (x)\, dx  -\int_{\partial U} \zeta(x)\, \partial_\nu\left( \H\vert_{\R^2
 \setminus U} \right)(x) \, d\mathcal H^1(x).
\end{align*}}It follows from this and~\eqref{deltah} that $\muup_S$ is equal to the one-dimensional Hausdorff measure, restricted to $\partial U$, multiplied by $L^2_{\mathrm{loc}}(\partial U)$ function $g:=\frac{1}{2\pi} \left( \partial_\nu\left( \H\vert_{\overline U} \right) -\partial_\nu\left( \H\vert_{\R^2\setminus U} \right)\right)$. Note that $g$ is nonnegative and cannot have mass greater than one, so $g\in L^2(\partial U)$.

Let $H^{\mu_0}$ be the potential generated by the measure $\mu_0$ given in~\eqref{mu0}. It is also the solution of the variational inequality~\eqref{e.vi} with $\psi$ replaced by $\frac12(c_3-|x|^2)$ for some $c_3\in \R$. The reason is that $\mu_0$ is the minimizer of $\I$ over all probability measures (without constraint) and hence the arguments of~Lemmas~\ref{lem2.1p=1} and~\ref{l.vi} apply.

Observe that
\begin{equation}\label{eqcoin}
  \left\{\H=\tfrac12 (c_1-|x|^2)\right\}= \overline{V} \cap\overline{ U} \quad \text{and} \ \left\{\H= \tfrac12 (c_2- |x|^2)\right\}= \overline{ (V \setminus  U)}.
\end{equation}
We show next that
\begin{equation}\label{chhmu}
H^{\mu_0} - \frac12 (c_3- c_2) \le \H \le H^{\mu_0} - \frac12 (c_3- c_1).
\end{equation}
We  prove only the second inequality, since the first is obtained via a similar argument. The function $G:=H^{\mu_0} - \frac12 (c_3- c_1)$ is larger than $\frac12(c_1-|x|^2)$ hence larger than $\psi$, and thus larger than $\H$ in $\{ \H = \psi\}$. Moreover, $G$ is superharmonic in $\R^2$ and $\H$ is harmonic in the complement of $\{ \H = \psi\}$, and thus $\H - G$ is subharmonic in $\R^2\setminus \{ \H=\psi\}$ and nonpositive on $\{ \H=\psi\}$. It follows from their definitions that $|\H - G|$ is bounded in $\R^2$. We deduce that $\H \leq G$ in $\R^2$, as desired.

Consider the difference of the two potentials, $w:= H^{\mu_0}- \H - \frac12 (c_3- c_1)$, which satisfies 
\begin{equation}
\label{subsup}
- \Delta w\ge 0 \ \text{in} \ \R^2\setminus \overline{V} \qquad \mbox{and} \qquad  -\Delta w \le 0 \ \text{in} \ \R^2\setminus \overline{D}
\end{equation}
and $0\le w\le \tfrac12 (c_1-c_2)$. We claim that:
\begin{equation}
\label{claim1}
\{w=0\}=\overline{ D} \cap\overline U,
\end{equation} 
\begin{multline}\label{claim2}
\mbox{either} \quad \{w=\tfrac12 (c_1-c_2)\} = 
\overline D \cap 
\overline{( V \setminus  U)}  \\  \mbox{or else} \quad \{w=\tfrac12 (c_1-c_2)\}=
(\overline D \cap \overline{( V \setminus  U)})\cup (\R^2\setminus D)
\end{multline}
and
\begin{equation}\label{claim3}
  U\not \subseteq D \qquad \mbox{implies that} \qquad \{w=\tfrac12 (c_1-c_2)\} = \overline D \cap \overline{( V \setminus  U)}.
\end{equation}

In order to prove these, we first observe that, by the monotonicity \eqref{chhmu}
of the obstacle problem,
\begin{equation}\label{inclu}
D\cap U \subseteq V\cap U \ \text{and } \ V \setminus U\subseteq  D \setminus U.
\end{equation}
Due to \eqref{eqcoin}, \eqref{inclu}, the fact that $H^{\mu_0} \geq \tfrac12(c_3- |x|^2)$ and $c_2<c_1$, we have
\begin{equation}
\label{inclu1}
\overline V \cap \{ w=0\}  \subseteq \overline  V \cap \overline  D  \cap \overline U = \overline D\cap \overline U \subseteq  \{w=0\}
\end{equation}
and
\begin{equation}
\label{inclu2}
\overline D \cap \left\{ w = \tfrac12 (c_1-c_2) \right\} \subseteq \overline D
\cap \overline {(V \setminus  U)}
= \overline {(V \setminus U)} \subseteq \left\{ w = \tfrac12 (c_1-c_2) \right\}.
\end{equation}

Here comes the argument for~\eqref{claim1}. In view of~\eqref{inclu1}, it suffices to show that $\{w=0\} \subseteq \overline V$. Suppose on the contrary that there exists $x\notin \overline{V}$ with $w(x) =0$. Then, by \eqref{subsup} and the strong maximum principle, we have $w \equiv 0$ in $\R^2 \setminus V$. By \eqref{inclu2}, continuity of $\H$  and $c_2\neq c_1$,  we deduce that $V \subseteq U$. In particular, $\H$ is harmonic in $\R^2\setminus \overline{U}$ and $w\equiv 0$ in $\R^2 \setminus\overline{U}$, which implies that $H^{\mu_0}$ is harmonic in $\R^2 \setminus \overline{U}$. This contradicts the assumption that $D \setminus U$ is nonempty, verifying~\eqref{claim1}.

We continue with the proofs of~\eqref{claim2} and~\eqref{claim3}.
In light of~\eqref{inclu2}, if $\{w=\tfrac12 (c_1-c_2)\}\neq \overline D 
\cap \overline{( V \setminus  U)}$, 
then there exists $x \notin D$ such that $w(x) =\tfrac12 (c_1-c_2)$. This implies that $w \equiv \tfrac12 (c_1-c_2)$ in $\R^2 \setminus D$ by the maximum principle and \eqref{subsup}. This confirms~\eqref{claim2}. Moreover, in this case $\H$ is harmonic outside of $\overline{D}$ (since $H^{\mu_0}$ is), which implies that $V\subseteq D$. Also, in view of \eqref{inclu2} and the continuity of $w$, we deduce that $U\subseteq D$, which completes the proof of~\eqref{claim3}.

We note that~\eqref{e.gap} follows from the combination of~\eqref{claim1},~\eqref{claim3} and~\eqref{inclu}.

We have left to prove~\eqref{e.sing},~\eqref{e.contract} and~\eqref{e.expand}. Each of these follows from a variation of an argument based on Hopf's lemma.

We first prove~\eqref{e.sing}, arguing by contradiction. Because of \eqref{inclu},
if \eqref{e.sing} fails then there
exists $x\in \overline V \cap \partial U \cap \overline D
\setminus \supp \muup_S$. Since $x \not\in \muup_S$, $\nabla w$ is continuous at $x$. In fact, we have $\nabla w(x) = 0$ since $w$ vanishes on $\overline U \cap \overline D$ by~\eqref{inclu1}. Due to~\eqref{subsup} and Hopf's lemma, we deduce that $w$ vanishes identically in the connected component of $\R^2\setminus V$ containing~$x$. Since $\H$ is 
harmonic in $\R^2 \setminus \overline V$ and $H^{\mu_0}$ is harmonic nowhere in $D$, we conclude that the intersection of $D$ and the connected component of $\R^2\setminus V$ containing $x$ is empty. This contradicts $x\in \overline D$, completing the argument for~\eqref{e.sing}.

Next we prove~\eqref{e.contract}. We need only verify the second claim, since the first claim was obtained above already in~\eqref{inclu}. In the case $U \not \subseteq D$, in view of \eqref{inclu}, it 
suffices to prove that $\overline{(V\setminus U)}
\cap \partial D$ is empty. Suppose on the contrary that there exists  
$x \in \overline{( V\setminus U)}
\cap \partial D$. We have already demonstrated a gap between 
$\overline {(V\setminus U)}$ and
$\partial U$ and thus $x\not \in \partial U$. In particular, $w\in C^{1,1}$ 
in a neighborhood of $x$. By \eqref{claim2}, $w\equiv \tfrac 12 (c_1-c_2)$ in 
$\overline D \cap \overline {(V \setminus U)}$ 
and thus $\nabla w(x)=0$. In view of Hopf's lemma and the fact that $w$ is subharmonic in $\R^2 \setminus D$ by~\eqref{subsup}, the second alternative must hold in~\eqref{claim2}. This contradicts~\eqref{claim3} to finish the proof of~\eqref{e.contract}.

We conclude with the argument for~\eqref{e.expand}, which is a bit more involved due to the fact that we need the best regularity for the boundary of the contact set in Caffarelli's theory, which is ``$C^{1,\alpha}$ except for singular points," see~\cite[Theorem 6]{C}. We proceed again by contradiction and suppose there exists $x \in\overline{U \cap D} \setminus ( V \cup \supp \muup_S)$. Since $x \in \overline D \setminus \supp\muup_S$, $x\not\in\partial U$ by~\eqref{e.sing} and so $w$ is $C^{1,1}$ in a neighborhood of $x$. Thus $w(x) = 0$ and $\nabla w(x)=0$. 
Notice that $x\in \partial V$ and thus it is a point on the boundary of the contact set. In view of \eqref{claim1} we have~$w>0$ in~$\R^2\setminus V$. Since $-\Delta w \ge 0 $ in~$\R^2\setminus V$ by~\eqref{subsup}, an application of Hopf's lemma gives us a contradiction, completing the proof. However, before we may apply Hopf's lemma, we must check that $\partial V$ is sufficiently smooth in a neighborhood of~$x$. Indeed, Hopf's lemma holds for $C^{1,\alpha}$ boundaries 
{(for any $\alpha > 0$)} 
but not for domains with boundaries which are merely $C^1$, see~Safonov~\cite{Sa}. 

Fortunately, the free boundary regularity theory of Caffarelli gives us just what we need: $\partial V$ is $C^{1,\alpha}$ near~$x$, 
{for every $0<\alpha < 1$.} 
The reason is that $x$~cannot be a singular point (as defined in~\cite{C}) of the free boundary, due to the fact that $\partial (U\cap D)$ is Lipschitz and $U \cap D \subseteq V$, which rules out the possibility that~$\partial V$ is contained in a ``thin strip" near $x$.  See~\cite[Theorem 6]{C}, which asserts that the boundary of~$V$ is $C^{1,\alpha}$ near~$x$ unless $V \cap B_r(x)$ is contained between two parallel planes separated by a distance of $o(r)$ as $r\to 0$. It follows that~$\partial V$ is regular enough in a neighborhood of $x$ to invoke Hopf's lemma. This completes the proof.
\end{proof}

\section{An example: real part constraint}
\label{sec-example}

We conclude with an explicit example, in which the constraint set $U$ is a half-space and $p=1$. We show that there is a critical point at 
which the minimizing measure becomes entirely concentrated 
(and equal to the semicircle law) on the boundary line.

For each $a\in \R$, we set $U_a := \{ z\in \C \,: \, \Re(z)< -a\}$, 
and set~$\mathcal C_a:=\{\mu\in \mathcal P(\C): \mu(\overline{U}_a)=1\}$. We 
denote 
by $L_a=\partial U_a$ 
the boundary line, 
by $\muup_a$ the minimizing measure of $I(\cdot)$ on $\mathcal C_a$, and 
by
$\muup_{S,a}$ its singular component as in Theorem \ref{T}. Note that $\muup_{S,a}(L_a)$ is the total mass of $\muup_{S,a}$. Also denote the semicircle law on $\R$ by $$\sigma(dy)=\frac{\sqrt{2-y^2}}{\pi} {\bf 1}_{\{|y|
    <\sqrt{2}\}} \, dy. $$

The following result asserts that $\muup_a = \muup_{S,a}$ if and only if $a\geq \sqrt{2}$, in which case $\muup_{S,a}$ is the semicircle law on~$L_a$.

\begin{prop}
  \label{prop-example}

For every $a<\sqrt{2}$, we have $\muup_{S,a}(L_a)<1$. Conversely, if $a\geq \sqrt{2}$, then $\muup_{S,a}(L_a)=1$ and, with $z=x+iy$, 
  $$\muup_{S,a}(dz)=\delta_{a}(dx)\times \sigma(dy)=:\sigma_{a}(dz)\,.$$
  \end{prop}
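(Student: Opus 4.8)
The plan is to reduce the two-dimensional variational problem on the half-space $\mathcal C_a$ to a one-dimensional problem on the line $L_a$, and to recognize the latter as the variational problem whose minimizer is the (shifted) semicircle law. First I would argue that if $\mu \in \mathcal C_a$, i.e.\ $\mu$ is supported in $\overline{U_a}$, then the measure $\mu'$ obtained from $\mu$ by ``projecting orthogonally onto $L_a$'' (pushing forward under $x+iy \mapsto -a+iy$) satisfies $\I[\mu'] \le \I[\mu]$, with equality only if $\mu$ was already concentrated on $L_a$. This is the key monotonicity input: the $|x|^2$ term strictly decreases under the projection unless $\mu$ already lives on $L_a$ (since $\Re(z) \le -a$ on the support and $|{-a}| \le |\Re(z)|$ there), while $-\iint \log|z-w|\,d\mu\,d\mu$ does not increase under projection onto a line — this last fact follows because the logarithmic kernel restricted to a line, viewed as a function of the two transverse coordinates, has the right sign property (it is the classical fact that orthogonal projection onto a line decreases logarithmic energy of a positive measure; alternatively one checks $\int \log|z-w|^{-1}$ against the one-dimensional computation). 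Granting this, the minimizer $\muup_a$ over $\mathcal C_a$, when $a \ge \sqrt 2$, must be supported on $L_a$, because otherwise its projection would be a strictly better competitor in $\mathcal C_a$. Once $\muup_a$ is supported on $L_a$, the problem becomes: minimize $-\iint \log|y-y'|\,d\rho(y)\,d\rho(y') + \int(a^2+y^2)\,d\rho(y)$ over probability measures $\rho$ on $\R$; the constant $a^2$ drops out, and the minimizer of $-\iint\log|y-y'| + \int y^2$ is exactly the semicircle law $\sigma$ (this is the standard GUE equilibrium measure, rescaled: the minimizer of $-\iint\log + \int y^2$ is $\tfrac{1}{\pi}\sqrt{2-y^2}\,\indic_{|y|<\sqrt2}$). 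That identifies $\muup_a = \sigma_a$ and gives $\muup_{S,a}(L_a)=1$.

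The converse direction — that $a < \sqrt 2$ forces $\muup_{S,a}(L_a) < 1$ — I would prove by contradiction: suppose $\muup_a$ were entirely singular, hence (by the argument just given, run in reverse) necessarily equal to $\sigma_a$. Then I would test the Euler--Lagrange characterization from Lemma~\ref{lem2.1}. With $p=1$, Lemma~\ref{lem2.1p=1} says $\sigma_a$ is the minimizer iff $2H^{\sigma_a}(x)+|x|^2 \ge c$ in $\overline{U_a}$ with equality on $\supp(\sigma_a) = L_a$. The function $2H^{\sigma_a}+|x|^2$ is explicitly computable: on $L_a$ it equals the constant value associated with the semicircle equilibrium relation, and moving off $L_a$ into $U_a$ (decreasing $\Re z$ past $-a$) I would compute its directional derivative in the $-\Re$ direction. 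Because $H^{\sigma_a}$ is harmonic off $L_a$ and symmetric across $L_a$, its normal derivative is determined by the density of $\sigma$ at the relevant height, and the leading behavior of $2H^{\sigma_a}+|x|^2$ as one enters $U_a$ is governed by $\partial_{\Re}(|x|^2) = 2\Re z = -2a$ plus the (one-sided, hence nonnegative) contribution from $\partial_\nu H^{\sigma_a}$. The inequality $2H^{\sigma_a}+|x|^2 \ge c$ in $U_a$ will fail near the edges of the support (near $y = \pm\sqrt 2$, where the semicircle density vanishes so the $H^{\sigma_a}$ contribution is too weak to compensate) precisely when $a < \sqrt 2$; this is where the threshold $\sqrt 2$ enters quantitatively. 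Hence $\sigma_a \notin \mathcal C_a$ can be the minimizer, so $\muup_a$ has a nontrivial absolutely continuous part and $\muup_{S,a}(L_a)<1$.

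The main obstacle I expect is the monotonicity-under-projection step and, relatedly, making the quantitative boundary computation in the converse clean. For the projection step one must be careful that projecting onto the line $L_a$ does not increase logarithmic energy — the honest way is to note $\log|z-w|^{-1} = -\tfrac12\log\big((\Re z - \Re w)^2 + (\Im z - \Im w)^2\big)$ and that replacing $(\Re z - \Re w)^2$ by $0$ can only increase $\log|z-w|^{-1}$, so integrating against $d\mu(z)\,d\mu(w) \ge 0$ preserves the inequality; combined with the strict decrease of $\int|x|^2$ this gives what is needed, and one should double-check the edge case where $\mu$ charges $L_a$ on a positive-mass set. For the converse, rather than a brute-force computation one can instead invoke Theorem~\ref{T} (specifically~\eqref{e.sing}, $\partial U_a \cap D \subseteq \supp \muup_{S,a}$, together with~\eqref{e.expand}) to see that $\muup_{S,a}$ can coincide with the semicircle only if the semicircle already ``reaches'' the boundary of its own support in the right way — and the cleanest route is simply to test~\eqref{2.1up} with $x = -a + i\sqrt2 \in \overline{U_a}$, where $|x|^2 = a^2 + 2$, against the value $c_1$ which equals $\int_{L_a}(2H^{\sigma_a}+|x|^2)\,d\sigma_a$; evaluating $H^{\sigma_a}$ at the spectral edge and comparing yields the sign condition that holds exactly when $a \ge \sqrt 2$. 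I would present the forward direction in full and the converse via this edge-evaluation, relegating the explicit value of $H^{\sigma}$ at $y=\pm\sqrt2$ to a short computation.
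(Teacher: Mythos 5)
There are two genuine gaps in your proposal, one in each direction.

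\textbf{Forward direction, $a\geq\sqrt2$.} Your projection argument is broken by a sign error, and you in fact derive the opposite of what you assert. You correctly compute that replacing $(\Re z-\Re w)^2$ by $0$ \emph{increases} $-\log|z-w|$; but $-\iint\log|z-w|\,d\mu\,d\mu$ is precisely the logarithmic part of $\I$, so projection onto $L_a$ \emph{increases} the logarithmic energy (it shrinks distances, and $-\log$ is decreasing in $|z-w|$) while decreasing $\int|z|^2\,d\mu$. The two effects compete, and the threshold $a=\sqrt2$ is exactly where the trade-off changes sign; a projection argument that claims both terms move the right way would yield $\muup_{S,a}(L_a)=1$ for \emph{every} $a$, contradicting the first half of the proposition. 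The paper does not project at all: it guesses the candidate $\sigma_a$, observes (from the one-dimensional equilibrium relation~\eqref{log-pot}) that $2H^{\sigma_a}+|z|^2$ is constant on $L_a$, and then verifies the sufficient condition of Lemma~\ref{lem2.1p=1} by a direct computation showing $G(y,a,b)\geq 0$ for $b>a$ when $a\geq\sqrt2$, using the explicit Stieltjes transform~\eqref{S-transf}.

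\textbf{Converse direction, $a<\sqrt2$.} Your overall strategy (assume $\muup_{S,a}(L_a)=1$, conclude the restriction must be $\sigma_a$, then show the variational inequality of Lemma~\ref{lem2.1p=1} fails) matches the paper, but your proposed test point is on the wrong part of the spectrum. Moving from $L_a$ into $U_a$, the normal derivative of $2H^{\sigma_a}(b+iy)+|b+iy|^2$ at $b=a^+$ is $2a-2\Im\mathcal S_\sigma(y+i0^+)=2a-2\sqrt{2-y^2}$. This is \emph{most negative at the center} $y=0$, where it equals $2(a-\sqrt2)$; at the spectral edges $y=\pm\sqrt2$ the density vanishes, the contribution of $H^{\sigma_a}$ disappears, and the derivative is $2a>0$, so the inequality holds locally there. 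Testing at $y=\sqrt2$ therefore yields nothing. The paper tests at $y=0$ and $b\searrow a$, obtaining $\lim_{b\searrow a}F(a,b,0)=2a-2\sqrt2<0$, which is the violation.

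In summary, the idea of reducing everything to Lemma~\ref{lem2.1p=1} (which the paper uses for both directions) is the right one, but you need to abandon the projection argument entirely and move your test point for the converse from the spectral edge to $y=0$.
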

  
  Before giving the proof of Proposition \ref{prop-example}, we recall some
  preliminaries on the semicircle law. Denote 
    \begin{equation*}\label{}
H^\sigma(z) : = -\int_{\R} \log|z-iy| \, \sigma(d y).
\end{equation*}
By a direct computation, we have
  \begin{eqnarray}
   2H^\sigma(x)+x^2-(
	1+(\log 2)/2)
    && \!\!\!\!\!\!
    \left\{
      \begin{array}{ll}
	=0,&
	|x|\leq \sqrt{2}\\
	\geq 0,& 
	|x|\leq \sqrt{2}\end{array}
	\right., \ x\in \R,\quad 
	\mbox{(\cite[Ex. 2.6.4]{AGZ})}.
	\label{log-pot}\\
	\mathcal S_\sigma(z):=\int\frac{1}{x-z} \sigma(dx)&=& -(z-\sqrt{z^2-2}),
	\ z\in \C_+\,,
	\quad 
	\mbox{(\cite[pg. 46, (2.4.7)]{AGZ})}.
	\label{S-transf}
      \end{eqnarray}
      (We recognize $\mathcal S_\sigma
      (z)$ as the Stieltjes transform of $\sigma$.)
    
    \begin{proof}[Proof of Proposition~\ref{prop-example}]
    Note first that if $\muup_{S,a}(\overline{U}_a)=1$ then 
      $\muup_{S,a}(L_a)=1$ by Theorem \ref{T} and therefore, using that
      for $z=a+iy$ one has $|z|^2=a^2+y^2$, it follows from \eqref{log-pot}
      that
      necessarily in such a situation
      $\muup_{S,a}(a+idy)=\sigma(dy)$ and 
      \begin{equation}
	\label{eq-5o}
	2H^{\muup_{S,a}}(a+iy)+|(a+iy)|^2=a^2+1+\frac{\log 2}{2}.
      \end{equation}
      Applying Lemma \ref{lem2.1p=1}, we conclude that
      a necessary and sufficient condition for~$\muup_{S,a}(L_a)=1$ is that
      \begin{equation}
	\label{eq-6o}
	2H^{\sigma_{a}}(b+iy)+|(b+iy)|^2\geq a^2+1+\frac{\log 2}{2} \quad
	\mbox{for all} \ b>a.
      \end{equation}
Suppose $a<\sqrt{2}$. Differentiate the left side of \eqref{eq-6o}
      with respect to $b$  to obtain, with $\theta\in \R$,
      \begin{eqnarray*}
	\frac{\partial}{\partial b}
	\left(2H^{\sigma_{a}}(b+iy)+|(b+iy)|^2i\right)
&=&2b-2\Im  \mathcal S_\sigma(y+i(b-a))\nonumber\\
&=&2b+2\Im (z-\sqrt{z^2-2})=:F(a,b,y)\,,
\end{eqnarray*}
where $z=y+i(b-a)$ and \eqref{S-transf} was used in the equalities.
Noting that 
$$\lim_{b\searrow a} F(a,b,0)=2a-2\sqrt{2}\,,$$
we conclude that if $a<\sqrt{2}$ the necessary condition for
$\muup_{S,a}(L_a)=1$ is not satisfied. This proves the first statement of the proposition. 

For the second statement, we introduce the function
$$G(y,a,b)=y^2+b^2+2H^\sigma(y+i(b-a))-a^2-1-\frac{\log 2}{2}.$$
Since $G(y,a,a)\geq 0$ by \eqref{log-pot}, it is enough due to Lemma \ref{lem2.1p=1} to 
verify that $G(y,a,b)\geq 0$ when $b>a$.
By symmetry and convexity, 
for fixed $a<b$, one has $G(y,a,b)\geq G(0,a,b)=:\overline G(a,b)$. 
Note that 
$$\frac{\partial \overline G(a,b)}{\partial b}
 \left\{\begin{array}{ll}
=  2\big(2b-a-\sqrt{2-(b-a)^2}\big),& 0\leq b-a<\sqrt{2}\,,
\\
>0,& b-a\geq \sqrt{2}\,,
\end{array}\right.$$
where again we used \eqref{S-transf}.
It is straightforward to check that the right side is nonnegative provided that
$b\geq a$ and $a\geq \sqrt{2}$. Since $\overline G(a,a)\geq 0$, this implies that
$\overline G(a,b)\geq 0$ for $b>a$. This completes the proof.
\end{proof}

\bibliographystyle{plain}
%\bibliography{obstacle}
\def\cprime{$'$}

\end{document}